%
\documentclass{article}
\let\tldocenglish=1  
\usepackage{tex-live}
\usepackage[latin1]{inputenc} 
\usepackage[T1]{fontenc}

\usepackage[inactive]{srcltx}
\usepackage[centertags]{amsmath}
\usepackage{amsfonts}
\usepackage{amssymb}
\usepackage{amsthm}
\usepackage{newlfont}

\theoremstyle{definition}
\newtheorem{definition}{Definition}[section]
\theoremstyle{remark} \theoremstyle{theorem}
\newtheorem{remark}{Remark}[section]
\newtheorem{theorem}{Theorem}[section]
\newtheorem{lemma}{Lemma}[section]
\newtheorem{corollary}{Corollary}[section]


\title{Existence, proper Pareto reducibility, and connectedness in multi-objective optimization}

\author{Latif Pourkarimi\\[3mm]
Razi University, Kermanshah, Iran;\\[1mm]
\url{lp_karimi@yahoo.com}\\[3mm]
$\&$\\[3mm]
Majid Soleimani-damaneh\\[3mm]
University of Tehran, Tehran, Iran;\\[1mm]
 \url{soleimani@khayam.ut.ac.ir}}

\date{April 2018}

\begin{document}
\maketitle

\begin{abstract}
This paper is divided to two parts. In the first part, we provide
elementary proofs for some important results in multi-objective
optimization. The given proofs are so simple and short in compared
to the existing ones. Also, a Pareto reducibility result is
extended from efficiency to proper efficiency. The second part is
devoted to the relationships between nonemptiness,
$R^p_{\geqq}$-(semi)compactness, external stability and
connectedness of the set of nondominated solutions in
multi-objective optimization. Furthermore, it is shown that some
assumption in an important result, concerning connectedness, is
redundant and should be removed.\vspace{3mm}\\
\textbf{\textit{Keywords}}: \textit{Multi-objective programming;
External stability; Connectedness; Proper efficiency; Pareto
reducibility.}
\end{abstract}

\begin{multicols}{2}
\tableofcontents
\end{multicols}

\section{Introduction}
Multi-objective optimization refers to maximiaing/minimizing more
than one objective functions over a feasible set. The image of the
feasible set under the objective functions is called the image
space, and it is usually denoted by $Y$. The set of minimals of
$Y$ is denoted by $Y_N$. Two basic and important questions in
multi-objective optimization are asking about the conditions under
which $Y_N\neq\emptyset$ and $Y_N$ is externally stable (i.e. each
dominated point of $Y$ is dominated by a member of $Y_N$)
\cite{ehr,saw}. Another important result in multi-objective
optimization is representing the set of (weak) efficient solutions
of a multi-objective problem with respect to that of its
subproblems \cite{ehr,mal,saw}. This subject is called Pareto
reducibility. The $\lq\lq$Pareto
reducibility" term was first used by Popovici \cite{pop}. In the first part of this paper, we provide
elementary proofs for some important results concerning existence,
external stability, and Pareto reducibility. The given proofs are
so simple and short in compared to the existing ones, and
specially these are suitable for teaching purposes. Also, a result
about Pareto reducibility is extended from efficiency to proper efficiency.

Connectedness and $R^p_{\geqq}$-(semi)compactness of $Y_N$ are
also two important notions in multi-objective optimization
\cite{ehr,saw}. The second part of the paper establishes the
equivalence of nonemptiness, external stability,
$R^p_{\geqq}$-compactness, and $R^p_{\geqq}$-semicompactness of
$Y_N$ under appropriate assumptions. Furthermore, it is shown that
one of the assumptions of a well-known result concerning
connectedness (established in \cite{nac}) is redundant and should
be removed.

The preliminaries are given in Section 2 and the main results are
presented in Sections 3 and 4.

\section{Preliminaries}
For two vectors $y^0,y^*\in R^p$, we use the following
componentwise orders:\\
$\bullet~y^0\leqq y^*$ iff $y_j^0\leq y^*_j$ for each $j$;\\
$\bullet~y^0\leq y^*$ iff $y^0\leqq y^*$ and $y^0\neq y^*$;\\
$\bullet~y^0<y^*$ iff $y_j^0<y^*_j$ for each $j$.

Three orders $\geqq,~\geq,$ and $>$ are defined analogously. Using
the componentwise order $\geqq$, the following cone is defined,
which is called the natural ordering cone:
$$R^p_{\geqq}=\{y\in R^p~:~y\geqq 0\}.$$

\begin{definition}
Let $Y\subseteq R^p$.\\
(i) $y^0\in Y$ is called a nondominated point of $Y$ if there does not exist $y\in Y$ such that $y\leq y^0$;\\
(ii) $y^0\in Y$ is called a weakly nondominated point of $Y$ if there does not exist $y\in Y$ such that $y<y^0$.
\end{definition}

The set of all nondominated points and the set of all weakly
nondominated points of $Y$ are denoted by $Y_N$ and $Y_{WN},$
respectively.

\begin{definition} \cite{geo}
Let $Y\subseteq R^p$. The vector $y^0\in Y$ is called a properly
nondominated point of $Y$ if $y^0\in Y_N$ and there exists scalar
$M>0$ such that for each $y\in Y$ and $i\in \{1,2,\ldots,p\}$
satisfying $y_i<y_i^0$ there exists $j\in \{1,2,\ldots,p\}$ such
that $y_j>y_j^0$ and
$$\frac{y_i^0-y_i}{y_j-y_j^0}\leq M.$$
\end{definition}

The set of all properly nondominated points of $Y$ is denoted by
$Y_{PN}$.


\begin{definition} \label{concepts}\cite{ehr}
$Y\subseteq R^p$ is called \\
(i) $R^p_{\geqq}$-convex if $Y+R^p_{\geqq}$ is convex;\\
(ii) $R^p_{\geqq}$-closed if $Y+R^p_{\geqq}$ is closed; \\
(iii) $R^p_{\geqq}$-compact if $(y-R^p_{\geqq})\cap Y$ is compact for every $y\in Y$;\\
(v) $R^p_{\geqq}$-semicompact if every open cover of $Y$ of the
form $\{(y^i-R^p_{\geqq})^c~:~y^i\in Y,~i\in \mathcal{I}\}$ has a
finite subcover.\end{definition}

\begin{definition} \cite{ehr} Let $Y\subseteq R^p$. The set $Y_N$ is called externally stable if $Y\subseteq Y_N+R^p_{\geqq}.$\end{definition}

The set $C\subseteq R^p$ is a cone if $\lambda C\subseteq C$ for
each $\lambda\geq 0$. If furthermore, $C+C\subseteq C$, then it is
said to be a convex cone. If $C\cap (-C)=\{0\}$, then it is called
pointed. The cone $C$ is called proper if it is nonempty, $C\neq
\{0\}$, and $C\neq R^p$. The nonegative and positive polar cones
corresponding to $C$ are defined as follows, respectively:
$$C^+=\{d\in R^p~:~d^tx\geq 0,~\forall x\in C\},~~~~~~~~$$
$$C^{++}=\{d\in R^p~:~d^tx>0,~\forall x\in C\setminus \{0\}\}.$$


Consider a multi-objective optimization problem (MOP) as follows:
\begin{equation}\label{mop}\begin{array}{l}
  \min f(x)=(f_{1}(x),f_{2}(x),...,f_{p}(x)),\\
  s.t.~~~x\in X, \end{array}
\end{equation}
where $X\subseteq R^n$ is a nonempty set and $f$ is a
vector-valued function composed of $p\geq 2$ real-valued
functions. The image of $X$ under $f$ is denoted by
$Y:=f(X)\subseteq R^p$ and is referred to image space.

\begin{definition}\label{d:1}
A feasible solution $\hat{x}\in X$ is called \\
(i) an efficient solution to MOP (\ref{mop}) if there is no $x\in
X$ such that $f(x)\leq f(\hat{x})$;\\
(ii) a weakly efficient solution to MOP (\ref{mop}) if there is no
$x\in X$ such that $f(x)<f(\hat{x})$.
\end{definition}

The set of all efficient solutions and the set of all weakly
efficient solutions of MOP (\ref{mop}) are denoted by $X_E(f)$ and
$X_{WE}(f)$, respectively.

In order to obtain efficient solutions with bounded trade-offs,
Geoffrion \cite{geo} suggested restricting attention to efficient
solutions that are proper in the sense of the following
definition.
\begin{definition}\label{d: 2}\cite{geo} A feasible solution $\hat{x}\in X$ is called a properly efficient
solution to MOP (\ref{mop}) if it is efficient and there is a real number $M>0$ such that for all
$i\in \{1,2,...,p\}$ and $x\in X$ satisfying $f_{i}(x)<f_{i}(\hat{x})$ there exists an index
$j\in \{1,2,...,p\}$ such that $f_{j}(x)>f_{j}(\hat{x})$ and
$$\frac{f_{i}(\hat{x})-f_{i}(x)}{f_{j}(x)-f_{j}(\hat{x})}\leq M.$$
\end{definition}

The set of all properly efficient solutions of MOP (\ref{mop}) is
denoted by $X_{PE}(f)$. 


Let $\rho\subseteq \{1,2,\ldots,p\}$ be nonempty. The set of all
efficient (resp. weakly efficient) solutions of
\begin{equation}\label{sub}\begin{array}{l}
  \min~f^\rho(x)=(f_{i}(x);~~i\in \rho)\\
  s.t.~~~x\in X, \end{array}
\end{equation}
is denoted by $X_{E}(f^{\rho})$ (resp. $X_{WE}(f^{\rho})$). The
set of all properly efficient solutions of Problem (\ref{sub}) is
denoted by $X_{PE}(f^{\rho})$.

\section{Some elementary proofs}
This section contains elementary proofs for three important
results in multi-objective optimization theory.

The following theorem has been proved by Borwein \cite{bor} for
general real linear vector spaces, and has been addressed in some
reference books, including \cite{ehr,saw}, for finite dimensional
multi-objective optimization. The proof addressed in
\cite{ehr,saw} is a technical proof utilizing the Zorn's lemma. In
this paper, we present a simple and short proof for this result.
The new proof is more appropriate for teaching purposes in
compared to the existing one.
\begin{theorem}\label{existence} \cite{ehr,saw} Let $Y\subseteq R^p$ and $y^0\in Y$. If $(y^0-R^p_{\geqq})\cap Y$ is compact, then $Y_N\neq \emptyset.$
\end{theorem}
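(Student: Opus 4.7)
\medskip

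\textbf{Plan.} The strategy is to reduce the problem to finding a nondominated point of the compact section $K := (y^0 - R^p_{\geqq}) \cap Y$, and then to produce such a point by minimizing a single continuous linear functional. This avoids Zorn's lemma entirely.

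First I would observe that $K$ is nonempty (it contains $y^0$) and, by hypothesis, compact. I would then establish the following transfer principle: every element of $K_N$ (i.e.\ nondominated in $K$) is automatically an element of $Y_N$. The argument is short: if $\bar{y} \in K_N$ were dominated in $Y$ by some $y \in Y$ with $y \leq \bar{y}$, then $y \leqq \bar{y} \leqq y^0$, hence $y \in (y^0 - R^p_{\geqq}) \cap Y = K$; but $y \leq \bar{y}$ with $y \in K$ contradicts $\bar{y} \in K_N$. So it suffices to show $K_N \neq \emptyset$.

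To produce a member of $K_N$, I would define the continuous linear functional $g(y) := \sum_{i=1}^{p} y_i$ on $R^p$. Since $K$ is compact, $g$ attains its minimum on $K$ at some $\bar{y} \in K$. I would then check that $\bar{y} \in K_N$: if some $y \in K$ satisfied $y \leq \bar{y}$, then $y \leqq \bar{y}$ with $y \neq \bar{y}$, which forces $g(y) < g(\bar{y})$, contradicting the choice of $\bar{y}$. Combining this with the transfer principle yields $\bar{y} \in Y_N$, so $Y_N \neq \emptyset$.

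There is no real obstacle here; the only subtle step is verifying the transfer principle $K_N \subseteq Y_N$, which depends crucially on the fact that $K$ is defined as the intersection of $Y$ with a lower set for the order $\leqq$, so that any dominator in $Y$ automatically lies in $K$. Once this observation is isolated, the rest is an immediate Weierstrass-type extremum argument applied to the positively-weighted sum $g$, which is much cleaner than the transfinite-induction proof found in \cite{ehr,saw}.
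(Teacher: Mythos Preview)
Your proposal is correct and is essentially the same argument as the paper's: both minimize the continuous linear functional $g(y)=\sum_{i=1}^{p}y_i$ over the compact section $K=(y^0-R^p_{\geqq})\cap Y$ and use the fact that any dominator of the minimizer in $Y$ must already lie in $K$. The only cosmetic difference is that you isolate the inclusion $K_N\subseteq Y_N$ as a separate ``transfer principle'', whereas the paper folds this observation directly into the verification that the minimizer belongs to $Y_N$.
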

\begin{proof} Consider the following auxiliary optimization problem:
\begin{equation}\label{aux}
\begin{array}{llr}
    \min & \displaystyle\sum_{j=1}^p y_j&\\
    s.t. & y\leqq y^0,&\\
    & y\in Y.&
  \end{array}
  \end{equation}
The set of feasible solutions of Problem (\ref{aux}) is $(y^0-R^p_{\geqq})\cap Y$ which is compact. The objective function of this problem
is continuous. Therefore, Problem (\ref{aux}) has an optimal solution, say $y^*$. We show that $y^*\in Y_N$. If $y^*\notin Y_N$, then
$$\exists \bar{y}\in Y;~~\bar{y}\leqq y^*\leqq y^0,~~\bar{y}\neq y^*.$$
Therefore, $\bar{y}$ is a feasible solution to (\ref{aux}) and
$$\displaystyle\sum_{j=1}^p \bar{y}_j<\displaystyle\sum_{j=1}^p y_j^*.$$
This contradicts the optimality of $y^*$ for (\ref{aux}) and completes the proof.  \end{proof}
\begin{remark}\label{remrem} Although in this paper we considered the natural cone $R^p_{\geqq}$ for ordering $R^p$, the above result is still valid if one considers any pointed convex
closed proper cone $C\subseteq R^p$ in lieu of $R^p_{\geqq}$ (as can be found in \cite{luc}). It can be proved utilizing our simple approach as well. To
this end, consider $d\in C^{++}$ and construct the following
auxiliary problem\footnote{It is not difficult to see that
$C^{++}\neq \emptyset$; see Remark 1.6 and Proposition 1.10 in
\cite{luc}.}:
\begin{equation}\label{aux-cone}\begin{array}{llrr}
    \min & d^ty&& \\
    s.t. & y^0-y\in C,&&\\
    & y\in Y.&&
  \end{array}
  \end{equation}
The set of feasible solutions of Problem (\ref{aux-cone}) is the compact set $(y^0-C)\cap Y$ and its objective function is continuous.
Therefore, Problem (\ref{aux-cone}) has an optimal solution, say $y^*$. If $y^*\notin Y_N$, then
$$\exists \bar{y}\in Y;~~y^*-\bar{y}\in C\backslash \{0\}$$
$$\Longrightarrow d^t(y^*-\bar{y})>0\Longrightarrow d^ty^*>d^t\bar{y}.$$
On the other hand, $$y^0-\bar{y}=y^0-y^*+y^*-\bar{y}\in C+C=C.$$
Hence, $\bar{y}$ is a feasible solution to Problem
(\ref{aux-cone}) and contradicts the optimality of $y^*$.\end{remark}

The theorem below has been addressed in some reference books,
including \cite{ehr,saw}, on finite dimensional multi-objective
optimization. In the following, we present a simple and short
proof for this result.
\begin{theorem}\label{stab}\cite{ehr,saw} Let $Y\subseteq R^p$ be nonempty and $R^p_{\geqq}-$compact. Then $Y_N$ is externally stable. \end{theorem}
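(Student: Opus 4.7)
The plan is to reuse, almost verbatim, the scalarization trick that proved Theorem \ref{existence}, but localized at each point of $Y$. External stability requires showing that every $y \in Y$ is dominated by (or equal to) some $y^* \in Y_N$, i.e.\ $y \in y^* + R^p_{\geqq}$. So I would fix an arbitrary $y \in Y$ and try to produce such a $y^*$.

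First I would form the auxiliary problem
\begin{equation*}
  \min \sum_{j=1}^{p} z_j \quad \text{s.t.} \quad z \leqq y, \ z \in Y,
\end{equation*}
whose feasible region is $(y - R^p_{\geqq}) \cap Y$. By the $R^p_{\geqq}$-compactness hypothesis (applied at the point $y \in Y$), this feasible region is compact, and the linear objective is continuous, so an optimum $y^*$ exists. By feasibility, $y^* \leqq y$, which already gives the required inclusion $y \in y^* + R^p_{\geqq}$ once we establish $y^* \in Y_N$.

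Next I would check that $y^* \in Y_N$ by essentially the same contradiction argument as in Theorem \ref{existence}. If some $\bar{y} \in Y$ satisfied $\bar{y} \leq y^*$, then by transitivity of $\leqq$ we would have $\bar{y} \leqq y^* \leqq y$, so $\bar{y}$ is feasible for the auxiliary problem, and $\bar{y} \neq y^*$ together with $\bar{y} \leqq y^*$ forces $\sum_j \bar{y}_j < \sum_j y^*_j$, contradicting the optimality of $y^*$. Hence $y^* \in Y_N$, and since $y$ was arbitrary, $Y \subseteq Y_N + R^p_{\geqq}$.

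I do not foresee a real obstacle here: the only subtle point is making sure that the minimizer $y^*$ of the localized problem is not merely nondominated among the truncated set $(y - R^p_{\geqq}) \cap Y$ but actually nondominated in all of $Y$. This is exactly what the transitivity step handles, because any dominator from $Y$ is automatically pushed back into the truncation by $\bar{y} \leqq y^* \leqq y$. In effect the same one-paragraph proof of Theorem \ref{existence} does the job with $y$ in place of $y^0$, and external stability follows immediately.
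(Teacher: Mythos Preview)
Your proposal is correct and follows essentially the same approach as the paper: fix an arbitrary $y^0\in Y$, solve the sum-minimization auxiliary problem over the compact section $(y^0-R^p_{\geqq})\cap Y$, and observe that the minimizer lies in $Y_N$ and dominates $y^0$. The paper's write-up is slightly terser---it simply cites the proof of Theorem~\ref{existence} to conclude $y^*\in Y_N$---while you spell out the transitivity step explicitly, but the argument is the same.
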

\begin{proof} Consider arbitrary $y^0\in Y$ and the auxiliary
optimization problem (\ref{aux}) as defined in the proof of
Theorem \ref{existence}. Similar to the proof of Theorem
\ref{existence}, it can be seen that Problem (\ref{aux}) has an
optimal solution, say $y^*\in Y_N$. Hence $y^*\in
y^0-R^p_{\geqq}.$ This implies
$$y^0\in y^*+R^p_{\geqq}\subseteq Y_N+R^p_{\geqq}.$$
Thus, $$Y\subseteq Y_N+R^p_{\geqq},$$ because $y^0\in Y$ was
arbitrary. This completes the proof.  \end{proof}
\begin{remark} The above result can be proved in a similar
way if one considers any pointed convex closed proper cone
$C\subseteq R^p$ instead of $R^p_{\geqq}$. See Remark \ref{remrem}
for more detail.\end{remark}

The rest of this section is devoted to Pareto reducibility \cite{pop}.
The following theorem gives a simple proof for Proposition 2.35 in
\cite{ehr} (see also Malivert and Boissard \cite{mal} and Lowe et al \cite{low}). Malivert
and Boissard \cite{mal} proved this result for representing the
weak efficient set of (MOP) with respect to the efficient set of
its subproblems. The proof given in \cite{ehr,mal} is very
complicated in compared to that given in the present paper.
\begin{theorem}\label{union} Assume that $f_1,f_2,\ldots,f_k$ are convex
functions and the feasible set, $X$, is convex. Then
$$X_{WE}(f)=\bigcup_{\begin{array}{c}
  \rho\subseteq \{1,2,\ldots,p\}\vspace{-1mm}\\
  \rho\neq \emptyset
\end{array}}X_E(f^{\rho}).$$\end{theorem}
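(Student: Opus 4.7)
My plan is to prove the two inclusions separately, with the nontrivial direction resting on the classical weighted-sum characterization of weak efficiency under convexity.

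For the inclusion $\supseteq$, which needs no convexity, I would pick any nonempty $\rho\subseteq\{1,\ldots,p\}$ and any $\hat{x}\in X_E(f^\rho)$, and argue by contradiction: if $\hat{x}\notin X_{WE}(f)$, there is some $x\in X$ with $f_i(x)<f_i(\hat{x})$ for all $i\in\{1,\ldots,p\}$, hence in particular for all $i\in\rho$. This yields $f^\rho(x)\leq f^\rho(\hat{x})$ with strict inequality componentwise, contradicting the efficiency of $\hat{x}$ for the subproblem. This takes a couple of lines.

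For the harder inclusion $\subseteq$, the idea is to attach to each $\hat{x}\in X_{WE}(f)$ a nonzero nonnegative weight vector $\lambda$ for which $\hat{x}$ minimizes $\sum_i \lambda_i f_i$ over $X$, and then take $\rho:=\{i:\lambda_i>0\}$. Concretely, I would introduce the convex set
$$A\;:=\;f(X)-f(\hat{x})+R^p_{\geqq},$$
whose convexity follows from the convexity of each $f_i$ and of $X$. The weak efficiency of $\hat{x}$ translates into $A\cap(-\mathrm{int}\,R^p_{\geqq})=\emptyset$. A standard convex separation theorem then supplies a nonzero $\lambda\in R^p$ with $\lambda^{t}y\geq 0$ for all $y\in A$ and $\lambda^{t}z\leq 0$ for all $z\in -\mathrm{int}\,R^p_{\geqq}$. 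Letting $z$ approach the coordinate directions forces $\lambda\in R^p_{\geqq}\setminus\{0\}$, and specializing to $y=f(x)-f(\hat{x})$ gives
$$\sum_{i=1}^p \lambda_i f_i(x)\;\geq\;\sum_{i=1}^p\lambda_i f_i(\hat{x})\quad\text{for all }x\in X.$$

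Setting $\rho:=\{i:\lambda_i>0\}\neq\emptyset$, I would then verify that $\hat{x}\in X_E(f^\rho)$: any $x\in X$ with $f^\rho(x)\leq f^\rho(\hat{x})$ would force $\sum_{i\in\rho}\lambda_i f_i(x)<\sum_{i\in\rho}\lambda_i f_i(\hat{x})$ (strict inequality in some $i\in\rho$, and all $\lambda_i>0$ on $\rho$), contradicting the weighted-sum minimization. The main obstacle is the separation step and the extraction of $\lambda\geqq 0$, $\lambda\neq 0$; everything else is a short contradiction argument. Since the paper advertises these proofs as simple, I expect the author to invoke the weighted-sum scalarization result for convex multi-objective problems (available as a standard lemma in \cite{ehr}) rather than redo the separation from scratch, in which case the whole argument collapses to choosing $\rho=\{i:\lambda_i>0\}$ and running the one-line contradiction above.
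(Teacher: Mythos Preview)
Your proposal is correct and matches the paper's approach: the paper invokes the weighted-sum scalarization theorem from \cite{ehr} directly (exactly as you anticipate in your final paragraph), sets $\rho=\{i:\lambda_i>0\}$, and concludes. The one nuance is that, in place of your direct contradiction showing $\hat{x}\in X_E(f^\rho)$, the paper cites the standard fact that a minimizer of a weighted sum with strictly positive weights is \emph{properly} efficient (Theorem~3.11 in \cite{ehr}), obtaining the stronger conclusion $\hat{x}\in X_{PE}(f^\rho)\subseteq X_E(f^\rho)$; this extra strength is precisely what the paper packages as its next theorem.
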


\begin{proof} $\supseteq:$ The proof of this part is trivial.\\
$\subseteq:$ The convexity assumptions imply that $Y=f(X)$ is
$R^p_{\geqq}$-convex. Assume that $\hat{x}\in X_{WE}(f)$. By the
weight-sum scalarization technique (Theorem 3.5 in \cite{ehr}),
there exists a nonzero vector
$\Lambda=(\lambda_1,\lambda_2,\ldots,\lambda_p)\geqslant 0$ such
that $\hat{x}$ is an optimal solution to
$$\begin{array}{llrr}
    \min & \Lambda^tf(x)&~~~~~~~~~~~~~~~&\\
    s.t. & x\in X.&&
  \end{array}$$
Setting $\rho=\{i\in \{1,2,\ldots,p\}~:~\lambda_i>0\}$, we have
$\hat{x}\subseteq X_{PE}(f^{\rho})\subseteq X_E(f^{\rho})$
according to Theorem 3.11 in \cite{ehr}. This completes the proof.\end{proof}

In fact, in the proof of the above theorem, we proved the
following theorem as well. The following theorem extends Theorem
2.36 in \cite{ehr}. The equality provided in Theorem \ref{,} is stronger than that given in
Theorem \ref{union} in the present paper and Theorem 2.36 in
\cite{ehr}. Notice that this equality provides a representation
for weak efficient solutions with respect to the properly
efficient solutions.

\begin{theorem}\label{,} Assume that $f_1,f_2,\ldots,f_k$ are convex
functions and the feasible set, $X$, is convex. Then
\begin{equation}\label{weak-proper}
X_{WE}(f)=\bigcup_{\begin{array}{c}
  \rho\subseteq \{1,2,\ldots,p\}\vspace{-3mm}\\
  \rho\neq \emptyset
\end{array}}X_{PE}(f^{\rho}).\end{equation}
\end{theorem}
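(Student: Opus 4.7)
The plan is to recycle the argument already used for Theorem~\ref{union} and supply the straightforward reverse containment that was not needed there. The point is that the forward direction of Theorem~\ref{union} actually delivered a \emph{properly} efficient solution of a subproblem, not merely an efficient one; so the stronger equality~(\ref{weak-proper}) comes for free, provided one also checks that proper efficiency of a subproblem implies weak efficiency of the full problem.

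For the inclusion $\subseteq$, I would take $\hat{x}\in X_{WE}(f)$ and invoke the convexity assumptions to conclude $R^p_{\geqq}$-convexity of $Y=f(X)$. The weighted-sum scalarization (Theorem 3.5 in \cite{ehr}) then supplies a nonzero $\Lambda=(\lambda_1,\ldots,\lambda_p)\geqslant 0$ such that $\hat{x}$ minimizes $\Lambda^tf$ over $X$. Setting $\rho=\{i:\lambda_i>0\}$ (nonempty since $\Lambda\neq 0$), the point $\hat{x}$ minimizes the strictly positive weighted sum $\sum_{i\in\rho}\lambda_i f_i$ over $X$. Theorem 3.11 in \cite{ehr} (Geoffrion's characterization under convexity, which asserts that a positive weighted sum minimizer is properly efficient) then gives $\hat{x}\in X_{PE}(f^{\rho})$, completing this direction.

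For the inclusion $\supseteq$, fix a nonempty $\rho\subseteq\{1,\ldots,p\}$ and $\hat{x}\in X_{PE}(f^{\rho})$. Then certainly $\hat{x}\in X_E(f^{\rho})\subseteq X_{WE}(f^{\rho})$. Suppose, toward a contradiction, that $\hat{x}\notin X_{WE}(f)$; then there exists $x\in X$ with $f_i(x)<f_i(\hat{x})$ for every $i\in\{1,\ldots,p\}$. Restricting to indices in $\rho$ gives $f^{\rho}(x)<f^{\rho}(\hat{x})$, contradicting $\hat{x}\in X_{WE}(f^{\rho})$. Hence $\hat{x}\in X_{WE}(f)$.

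The main ingredient is Theorem 3.11 in \cite{ehr}, which upgrades the scalarization output from efficiency to proper efficiency under convexity; once that is in hand there is no real obstacle. The only subtlety worth flagging is the role of the restriction to indices with $\lambda_i>0$: one needs the weights appearing in the subproblem scalarization to be strictly positive in order to invoke the proper-efficiency conclusion, which is precisely why $\rho$ is defined as the support of $\Lambda$ rather than as all of $\{1,\ldots,p\}$.
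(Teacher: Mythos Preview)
Your proposal is correct and follows essentially the same route as the paper. The paper's entire proof of this theorem is the one-line remark that the argument given for Theorem~\ref{union} already produced $\hat{x}\in X_{PE}(f^{\rho})$ via Theorems~3.5 and~3.11 of \cite{ehr}; you recycle precisely this argument for the $\subseteq$ direction, and your explicit verification of $\supseteq$ (which the paper leaves implicit as ``trivial'' for Theorem~\ref{union}, whence it follows here since $X_{PE}(f^{\rho})\subseteq X_E(f^{\rho})$) is correct and, if anything, more complete than what the paper writes.
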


The function
$f:X\longrightarrow R^p$ is called convexlike if for each $x,y\in
X$ and each $\lambda\in (0,1)$ there exists some $z \in X$ such
that $f(z)\leq \lambda f(x)+(1-\lambda)f(y).$ It is not difficult
to see that; if $f$ is convex-like, then $Y=f(X)$ is
$R^p_{\geqq}$-convex. The following result shows that Equation
(\ref{weak-proper}) holds if one replaces the assumption $\lq\lq
f$ is convex" with weaker assumption $\lq\lq f$ is convexlike".
The proof of this theorem is similar to that of Theorem
\ref{union} and is hence omitted.

\begin{theorem}\label{I2} If $f $ is a convex-like function on $X$, then
$$X_{WE}(f)=\bigcup_{\emptyset\neq \rho\subseteq\{1,\dots,p\}}X_{PE}(f^{\rho}).$$
\end{theorem}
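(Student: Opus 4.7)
The plan is to mirror the argument used for Theorem~\ref{union} essentially verbatim, exploiting the observation already recorded in the paragraph preceding the statement: convex-likeness of $f$ implies that $Y=f(X)$ is $R^p_{\geqq}$-convex. Since convexity of the components $f_i$ entered that earlier proof only through this consequence on the image set, the same scheme should transfer to the convex-like setting with no substantive change.

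The inclusion $\bigcup_{\rho} X_{PE}(f^{\rho}) \subseteq X_{WE}(f)$ is immediate and independent of any convexity hypothesis: if $\hat{x} \in X_{PE}(f^{\rho}) \subseteq X_{E}(f^{\rho})$, then there is no $x \in X$ with $f_i(x) < f_i(\hat{x})$ for all $i \in \rho$, and a fortiori no $x \in X$ with $f_i(x) < f_i(\hat{x})$ for all $i \in \{1,\ldots,p\}$, so $\hat{x} \in X_{WE}(f)$.

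For the reverse inclusion I would take $\hat{x} \in X_{WE}(f)$ and, using $R^p_{\geqq}$-convexity of $Y$, invoke the weighted-sum scalarization (Theorem~3.5 of \cite{ehr}) to obtain a nonzero $\Lambda=(\lambda_1,\ldots,\lambda_p) \geqq 0$ for which $\hat{x}$ minimizes $\Lambda^t f$ over $X$. Setting $\rho = \{i : \lambda_i > 0\}$, which is nonempty because $\Lambda \neq 0$, the point $\hat{x}$ minimizes a strictly positively weighted sum of the $f_i$ with $i\in\rho$ over $X$, and Theorem~3.11 of \cite{ehr} then delivers $\hat{x} \in X_{PE}(f^{\rho})$, completing the proof.

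The only real obstacle is verifying that Theorems~3.5 and~3.11 of \cite{ehr} can legitimately be invoked on the basis of $R^p_{\geqq}$-convexity of $Y$ alone, without appealing to convexity of the individual $f_i$'s. This is precisely how those scalarization results are phrased in image space, so the step amounts to a bookkeeping check; no new technique beyond what already appeared in the proof of Theorem~\ref{union} is required, which is why the paper declares the proof to be omitted.
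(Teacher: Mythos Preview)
Your proposal is correct and follows exactly the scheme the paper intends: it reproduces the proof of Theorem~\ref{union} with the sole change that $R^p_{\geqq}$-convexity of $Y=f(X)$ now comes from convex-likeness of $f$ rather than from convexity of the $f_i$ and $X$. This is precisely why the paper omits the proof as ``similar.''
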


\section{External stability and connectedness}

The following theorem shows that the result given in Theorem \ref{stab} is still valid under weaker assumption $\lq\lq R^p_{\geqq}$-semicompactness" in lieu
of $\lq\lq R^p_{\geqq}$ -compactness".
\begin{theorem}\label{semi} If $Y\subseteq R^p$ is nonempty and $R^p_{\geqq}-$semicompact, then $Y_N$ is externally stable. \end{theorem}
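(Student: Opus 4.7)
The plan is to fix an arbitrary $y^0\in Y$ and exhibit some $y^*\in Y_N$ with $y^*\leqq y^0$; since $y^0$ is arbitrary this will deliver $Y\subseteq Y_N+R^p_{\geqq}$. Set $S:=(y^0-R^p_{\geqq})\cap Y$, which is nonempty because $y^0\in S$. The opening observation is that any $\leqq$-minimal element of $(S,\leqq)$ automatically lies in $Y_N$: if $y^*\in S$ is minimal in $S$ and some $y\in Y$ satisfied $y\leq y^*$, then $y\leqq y^*\leqq y^0$ would place $y$ in $S$ strictly below $y^*$, contradicting minimality. So it suffices to produce a $\leqq$-minimal element of $S$.

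I would obtain such an element by applying Zorn's lemma to $(S,\leqq)$, for which each chain $C\subseteq S$ must admit a lower bound in $S$. The tool here is the finite-intersection-property reformulation of $R^p_{\geqq}$-semicompactness: passing to complements in Definition \ref{concepts}(v), any family $\{y^i-R^p_{\geqq}:y^i\in Y,\, i\in\mathcal{I}\}$ whose finite subfamilies all intersect $Y$ nontrivially must satisfy $\bigcap_{i\in\mathcal{I}}(y^i-R^p_{\geqq})\cap Y\neq\emptyset$. For the family $\{c-R^p_{\geqq}:c\in C\}$ this finite-intersection property is immediate from the chain hypothesis: any finite $c^1,\dots,c^n\in C$ admits a $\leqq$-smallest member $c^{i^*}$, and $c^{i^*}$ itself belongs to $\bigcap_j(c^j-R^p_{\geqq})\cap Y$. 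Semicompactness therefore produces $z\in\bigcap_{c\in C}(c-R^p_{\geqq})\cap Y$; fixing any $c_0\in C$ gives $z\leqq c_0\leqq y^0$, so $z\in S$ is the required lower bound (when $C$ is empty, take $y^0$ itself).

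Zorn's lemma now supplies a $\leqq$-minimal $y^*\in S$, and by the opening observation $y^*\in Y_N$; hence $y^0\in y^*+R^p_{\geqq}\subseteq Y_N+R^p_{\geqq}$, as required. The main technical step of the plan is the translation of the open-cover form of $R^p_{\geqq}$-semicompactness into the finite-intersection-property form, since that is the sole place the hypothesis is actually used; after it, the chain verification is forced by total ordering and the remainder is a standard Zorn-style argument that parallels the existence reasoning behind Theorem \ref{existence}.
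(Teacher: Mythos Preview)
Your proof is correct. Both your argument and the paper's begin by fixing $y^0\in Y$, setting $S=(y^0-R^p_{\geqq})\cap Y$, and observing that any nondominated point of $S$ is automatically nondominated in $Y$; they diverge only in how a point of $S_N$ is produced. The paper first verifies that $S$ itself inherits $R^p_{\geqq}$-semicompactness from $Y$ (via a short covering argument: any cover of $S$ by sets $(y^i-R^p_{\geqq})^c$ extends to a cover of $Y$ by adjoining $(y^0-R^p_{\geqq})^c$), and then invokes the existence theorem for semicompact sets (Theorem~2.12 in \cite{ehr}) as a black box to obtain $\bar y\in S_N$. You instead apply Zorn's lemma directly to $(S,\leqq)$, using the finite-intersection-property reformulation of the semicompactness of $Y$ itself to bound chains. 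Your route is more self-contained and sidesteps the need to prove $S$ semicompact; the paper's route is more modular but ultimately rests on a cited result whose standard proof is itself a Zorn argument of exactly the kind you wrote out. One small remark: your closing line says the argument ``parallels the existence reasoning behind Theorem~\ref{existence}'', but in this paper the whole point of the proof of Theorem~\ref{existence} is to \emph{avoid} Zorn's lemma via an auxiliary scalar minimization; the Zorn-style reasoning you employ is rather the classical approach of \cite{ehr,saw} that the paper set out to replace in the compact case.
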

\begin{proof} Let $y^0\in Y$ be arbitrary. Setting $$Y^0=(y^0-R^p_{\geqq})\cap Y,$$ we show that $Y^0$ is $R^p_{\geqq}-$semicompact. To this end, assume that
$$Y^0\subseteq \displaystyle\cup_{i\in I} (y^i-R^p_{\geqq})^c.$$
Then
$$Y\subseteq \biggl(\displaystyle\cup_{i\in I} (y^i-R^p_{\geqq})^c\biggl)\cup (y^0-R^p_{\geqq})^c.$$
By assumption of the theorem, there exists $m\in N$ such that
$$Y\subseteq \biggl(\displaystyle\cup_{i=1}^m (y^i-R^p_{\geqq})^c\biggl)\cup (y^0-R^p_{\geqq})^c.$$
Hence, $$Y^0=(y^0-R^p_{\geqq})\cap Y\subseteq \displaystyle\cup_{i=1}^m (y^i-R^p_{\geqq})^c.$$
Therefore, $Y^0$ is $R^p_{\geqq}-$semicompact.

Since $Y^0$ is $R^p_{\geqq}-$semicompact, by Theorem 2.12 in \cite{ehr}, $Y^0_N\neq \emptyset.$ Thus, there exists $\bar{y}\in Y_N^0$.\\
Now, we show that $\bar{y}\in Y_N$; otherwise $$\exists y^*\in Y;~~y^*\leqq \bar{y} \leqq y^0,~~y^*\neq \bar{y}.$$
$$\Longrightarrow \exists y^*\in Y^0,~~y^*\leqq \bar{y},~~y^*\neq \bar{y}.$$
These contradict $\bar{y}\in Y_N^0$. Therefore, $\bar{y}\in Y_N\cap Y_N^0.$
Hence, $$y^0\in \bar{y}+R^p_{\geqq}\subseteq Y_N+R^p_{\geqq}.$$
Since $y^0\in Y$ is arbitrary, we have
$$Y\subseteq Y_N+R^p_{\geqq},$$
and the proof is completed.  \end{proof}

For a given set $Y \subseteq R^p$, the set $Y^{\infty}$ is defined
as follows:
$$Y^{\infty}:=\{d\in R^p : d\neq 0,~y+\alpha d\in Y,~\forall  y\in Y,~\forall\alpha>0\}.$$
In fact, $Y^{\infty}$ is the set of recession directions of
$Y$.
\begin{lemma}\label{lem1}\cite{rock} Assume that $Y\subseteq R^p$ is a closed convex set. If there exist $\hat y\in Y$ and $d\in R^p$ such that $\hat y+\alpha d\in Y$ for any $\alpha>0$, then $d\in Y^\infty$.
\end{lemma}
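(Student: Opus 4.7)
The approach is to exploit convexity of $Y$ to ``transport'' the ray emanating from $\hat y$ to a ray emanating from an arbitrary point $y\in Y$, and then to invoke closedness of $Y$ to pass to a limit. Fix an arbitrary $y\in Y$ and $\alpha>0$; the goal is to show $y+\alpha d\in Y$. Taking $y$ and $\alpha$ arbitrary will then give the recession property of $d$ in the definition of $Y^{\infty}$ (the requirement $d\neq 0$ in that definition being part of the standing setup of the statement).

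The first step is to observe that, for each positive integer $n$, applying the hypothesis with the scalar $n\alpha>0$ yields $\hat y+n\alpha d\in Y$. The second step is to form the convex combination
$$y_n:=\Bigl(1-\tfrac{1}{n}\Bigr) y+\tfrac{1}{n}\bigl(\hat y+n\alpha d\bigr),$$
which lies in $Y$ by convexity. An algebraic simplification gives
$$y_n=\Bigl(1-\tfrac{1}{n}\Bigr) y+\tfrac{1}{n}\,\hat y+\alpha d,$$
so that $y_n\to y+\alpha d$ as $n\to\infty$. The third step is to use closedness of $Y$ to conclude that the limit $y+\alpha d$ belongs to $Y$. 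Since $y\in Y$ and $\alpha>0$ were arbitrary, this verifies the defining property of $Y^{\infty}$ for $d$.

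The only genuinely delicate point is the choice of convex combination in the second step: the far-out point $\hat y+n\alpha d$ must be weighted by $1/n$ so that the factor $n$ cancels, leaving a constant displacement $\alpha d$ together with a vanishing correction $\tfrac{1}{n}(\hat y-y)$. Once this choice is made, the rest is a routine application of convexity (to keep $y_n$ inside $Y$) and closedness (to take the limit), both of which are immediate from the hypotheses. I do not foresee any serious obstacle beyond spotting the correct combination.
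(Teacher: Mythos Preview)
Your argument is correct and is precisely the classical proof of this fact (see Theorem~8.3 in Rockafellar's \emph{Convex Analysis}): form the convex combination $(1-\tfrac{1}{n})y+\tfrac{1}{n}(\hat y+n\alpha d)$, simplify, and pass to the limit using closedness. The paper itself does not supply a proof of this lemma; it merely cites \cite{rock}, so there is nothing further to compare. Your remark that the condition $d\neq 0$ must be read as part of the standing hypothesis is apt, since with the paper's definition of $Y^{\infty}$ (which excludes the zero vector) the statement would otherwise fail trivially at $d=0$.
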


\begin{lemma}\label{lem2}\cite{rock} Assume that $Y\subseteq R^p$ is a closed convex set. Then $Y$ is unbounded if and only if $Y^\infty\neq
\emptyset$.
\end{lemma}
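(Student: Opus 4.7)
The plan is to prove the two directions separately, with the backward direction essentially immediate and all the real work in the forward direction.

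For the backward implication, suppose $Y^\infty\neq\emptyset$ and pick any $d\in Y^\infty$. By the paper's definition, $d\neq 0$, and for any fixed $\hat y\in Y$ we have $\hat y+\alpha d\in Y$ for every $\alpha>0$. Since $\|\hat y+\alpha d\|\to\infty$ as $\alpha\to\infty$, $Y$ is unbounded.

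For the forward implication, suppose $Y$ is unbounded. In particular $Y\neq\emptyset$, so I fix some $\hat y\in Y$ and choose a sequence $\{y^k\}\subseteq Y$ with $\|y^k-\hat y\|\to\infty$. The idea is to let the natural candidate for a recession direction emerge as a limit of the normalized difference vectors. Set
$$d^k:=\frac{y^k-\hat y}{\|y^k-\hat y\|}.$$
These unit vectors lie on the compact unit sphere, so after passing to a subsequence $d^k\to d$ with $\|d\|=1$, in particular $d\neq 0$. Now fix any $\alpha>0$. For all sufficiently large $k$, $\alpha/\|y^k-\hat y\|\in(0,1)$, so by convexity of $Y$ the point
$$z^k:=\left(1-\frac{\alpha}{\|y^k-\hat y\|}\right)\hat y+\frac{\alpha}{\|y^k-\hat y\|}\,y^k$$
lies in $Y$. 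A direct computation gives $z^k=\hat y+\alpha d^k\to \hat y+\alpha d$, and the closedness of $Y$ then forces $\hat y+\alpha d\in Y$. Since $\alpha>0$ was arbitrary, Lemma \ref{lem1} yields $d\in Y^\infty$, so $Y^\infty\neq\emptyset$.

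The only delicate step is the forward direction, where the main obstacle is extracting a nonzero recession direction from an unbounded sequence in a way that respects both convexity and closedness. The normalization $d^k:=(y^k-\hat y)/\|y^k-\hat y\|$ combined with the convex interpolation $z^k$ is precisely what makes both properties cooperate: convexity puts $z^k$ in $Y$ for each $k$, and closedness transfers membership to the limit. Lemma \ref{lem1} then packages the verification of the defining condition of $Y^\infty$ into one line.
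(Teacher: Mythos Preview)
Your argument is correct and is the standard proof one finds in convex analysis texts. Note, however, that the paper does not supply its own proof of this lemma: it is stated with a citation to Rockafellar's \emph{Convex Analysis} and used as a black box in the proof of Theorem~\ref{full}. So there is nothing to compare against here beyond observing that what you wrote is precisely the classical argument behind the cited result: normalize an unbounded sequence, use compactness of the sphere to extract a limit direction, use convexity to place the interpolated points $z^k=\hat y+\alpha d^k$ in $Y$, use closedness to pass to the limit, and then invoke Lemma~\ref{lem1} to promote the single-basepoint condition to the full defining property of $Y^\infty$.

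One small caveat: your backward implication silently assumes $Y\neq\emptyset$ when you ``fix $\hat y\in Y$''. With the paper's definition of $Y^\infty$, if $Y=\emptyset$ then the condition ``$y+\alpha d\in Y$ for all $y\in Y$'' is vacuously satisfied by every nonzero $d$, so $Y^\infty\neq\emptyset$ while $Y$ is bounded. This is harmless in context (the paper always works with nonempty $Y$, and Rockafellar defines the recession cone only for nonempty sets), but it would be cleaner to state the nonemptiness hypothesis explicitly.
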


The following theorem gives a full connection between the notions
addressed in Definition \ref{concepts}.
\begin{theorem}\label{full} Let $Y\subseteq R^p$ be nonempty, $R^p_{\geqq}-$convex and $R^p_{\geqq}-$closed. If
$(y-R^p_\geqq)\cap Y$ is closed
for every $y\in Y$, then the following statements are equivalent: \\
i) $Y_N\neq \emptyset$.\\
ii) $(y-R^p_\geqq)\cap Y$ is bounded for every $y\in
Y.$\\
iii) $Y$ is $R^p_{\geqq}-$compact.\\
iv) $Y$ is $R^p_{\geqq}-$semicompact.\\
v) $Y_N$ is externally stable.\end{theorem}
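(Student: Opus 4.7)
The plan is to prove the cycle of implications (i) $\Rightarrow$ (ii) $\Rightarrow$ (iii) $\Rightarrow$ (iv) $\Rightarrow$ (v) $\Rightarrow$ (i). Four of these links are essentially immediate: (ii) $\Rightarrow$ (iii) is the fact that closed plus bounded is compact, using the standing hypothesis that each $(y-R^p_{\geqq})\cap Y$ is closed; (iv) $\Rightarrow$ (v) is precisely Theorem~\ref{semi}; and (v) $\Rightarrow$ (i) follows from $Y\neq\emptyset$ together with $Y\subseteq Y_N+R^p_{\geqq}$. For (iii) $\Rightarrow$ (iv), given a cover $\{(y^i-R^p_{\geqq})^c:i\in I\}$ of $Y$, I would fix any $y^0\in Y$ and choose an index $i_0$ with $y^0\in(y^{i_0}-R^p_{\geqq})^c$; the portion of $Y$ lying outside $y^{i_0}-R^p_{\geqq}$ is already covered by this single cover element, while the remainder $(y^{i_0}-R^p_{\geqq})\cap Y$ is compact by $R^p_{\geqq}$-compactness and hence admits a finite subcover from the family.

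The core of the argument is the implication (i) $\Rightarrow$ (ii), which I plan to handle via recession cones. The natural ambient set is $\tilde Y:=Y+R^p_{\geqq}$, since the $R^p_{\geqq}$-convexity and $R^p_{\geqq}$-closedness hypotheses make $\tilde Y$ closed and convex, properties that $Y$ itself may lack. I would first record the inclusion $Y_N\subseteq \tilde Y_N$: if some $\tilde y=y'+r\in\tilde Y$ (with $y'\in Y$, $r\in R^p_{\geqq}$) dominated an $\hat y\in Y_N$, then either $y'=\hat y$, forcing $r=0$ and $\tilde y=\hat y$ (a contradiction), or $y'$ strictly dominates $\hat y$ already in $Y$, again a contradiction.

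Now suppose for contradiction that $Y_N\neq\emptyset$ while $(y-R^p_{\geqq})\cap Y$ is unbounded for some $y\in Y$. Since this set is contained in the closed convex set $(y-R^p_{\geqq})\cap \tilde Y$, the latter is also unbounded, so Lemma~\ref{lem2} provides a nonzero recession direction $d$. Specializing the recession condition to the base point $y$ itself, the constraint $y+\alpha d\leqq y$ for all $\alpha>0$ forces $d\leqq 0$, and with $d\neq 0$ this gives $d\leq 0$; simultaneously, $y+\alpha d\in\tilde Y$ for all $\alpha>0$ together with Lemma~\ref{lem1} applied to $\tilde Y$ lifts $d$ to a recession direction of $\tilde Y$ itself. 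Picking any $\hat y\in Y_N\subseteq\tilde Y_N$, the translate $\hat y+d$ then lies in $\tilde Y$ and satisfies $\hat y+d\leq\hat y$, contradicting $\hat y\in\tilde Y_N$. The delicate step, and the one I expect to require the most care, is precisely this lifting of a recession direction from the slice $(y-R^p_{\geqq})\cap\tilde Y$ to the whole of $\tilde Y$: it is exactly here that the global $R^p_{\geqq}$-convexity and $R^p_{\geqq}$-closedness hypotheses are consumed through Lemmas~\ref{lem1} and~\ref{lem2}, and without them the recession-cone analysis collapses.
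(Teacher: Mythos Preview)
Your proposal is correct and follows essentially the same route as the paper: the same cycle (i)$\Rightarrow$(ii)$\Rightarrow$(iii)$\Rightarrow$(iv)$\Rightarrow$(v)$\Rightarrow$(i), with the core step (i)$\Rightarrow$(ii) handled via the recession cone of the closed convex set $(y-R^p_{\geqq})\cap(Y+R^p_{\geqq})$, extracting $0\neq d\leqq 0$ and lifting it to a recession direction of $Y+R^p_{\geqq}$ through Lemmas~\ref{lem1} and~\ref{lem2}. The only cosmetic differences are that the paper cites Ehrgott for (iii)$\Rightarrow$(iv) rather than giving your direct argument, and it phrases the contradiction as $(Y+R^p_{\geqq})_N=\emptyset$ together with $Y_N=(Y+R^p_{\geqq})_N$, whereas you use the inclusion $Y_N\subseteq(Y+R^p_{\geqq})_N$ and dominate a single chosen $\hat y\in Y_N$.
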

\begin{proof} (i$\Rightarrow$ii): By contradiction assume that there exists  $\hat y\in Y$ such that
$(\hat y-R^p_\geqq)\cap Y$ is not bounded; thus $(\hat
y-R^p_\geqq)\cap(Y+R^p_\geqq)$ is an unbounded closed convex set.
Hence, by Lemma \ref{lem2} there exists nonzero vector $d\in R^p$
such that $$\hat y+\alpha d\in (\hat
y-R^p_\geqq)\cap(Y+R^p_\geqq),~~~~\forall \alpha>0.$$ Thus $0\neq
d\leqq 0$ which results $y+d\leqq y$ and $y\neq y+d$ for each
$y\in Y+R^p_\geqq$. Furthermore, by Lemma \ref{lem1}, $$y+d\in
Y+R^p_\geqq,~~~\forall y\in Y+R^p_\geqq.$$ These imply
$(Y+R^p_\geqq)_N=\emptyset$. Therefore $Y_N=\emptyset$, due to the
equality of $(Y+R^p_\geqq)_N$ and $Y_N$.
This contradicts the assumption.\\
(ii$\Rightarrow$iii): Trivial.\\
(iii$\Rightarrow$ iv): See Proposition 2.14 in \cite{ehr}.\\
(iv$\Rightarrow$v): This part results from Theorem \ref{semi}.\\
(v$\Rightarrow$ i): Trivial. \end{proof}

\begin{remark}
Notice that some assumptions of the above theorem are redundant in
some parts. Although all assumptions are necessary in proving
$(i)\Rightarrow (ii)$, $\lq\lq R^p_{\geqq}-$convexity and
$R^p_{\geqq}-$closedness" are redundant in proving
$(ii)\Rightarrow (iii)$. To prove $(iii)\Rightarrow (iv)
\Rightarrow (v) \Rightarrow (i)$, three assumptions $\lq\lq
R^p_{\geqq}-$convexity, $R^p_{\geqq}-$closedness, and the
closedness of $(y-R^p_\geqq)\cap Y$" are redundant.
\end{remark}

Now, we deal with a topological property of the nondominated set,
connectedness property. This property may help to explore the
nondominated set starting from a single nondominated point using
local search ideas. Connectedness will also make the task of
selecting a final compromise solution among the set of
nondominated solutions easier \cite{ehr}.

A set $S\subseteq R^p$ is called not connected if there exist open
sets $O_1,~O_2$ such that $S\subseteq O_1\cup O_2$, $S\cap O_1\neq
\emptyset$, $S\cap O_2\neq \emptyset$, and $S\cap O_1\cap
O_2=\emptyset$. Otherwise, $S$ is called connected.

One of the most important results concerning the connectedness of
$Y_N$ has been proved by Naccache \cite{nac}. He established the
following result.

\begin{theorem}\label{connect} \cite{nac} If $Y\subseteq R^p$ is closed, convex, and $R^p_{\geqq}-$compact, then $Y_N$ is
connected.\end{theorem}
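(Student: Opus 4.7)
The plan is to realize $Y_N$ via weighted-sum scalarization as (the closure of) a union of connected subsets of $Y$ indexed by a connected parameter set. The convexity of $Y$ will make each scalarized problem have a convex hence connected optimal set, while closedness and $R^p_{\geqq}$-compactness will furnish existence of these optima; upper semicontinuity of the optimal-set correspondence will then yield connectedness of the union, and a density argument will transfer connectedness from $Y_{PN}$ to $Y_N$.

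Concretely, let $\Lambda^{++}=\{\lambda\in R^p:\lambda>0,\ \sum_{i=1}^p\lambda_i=1\}$ be the open unit simplex, which is convex and hence connected. For each $\lambda\in\Lambda^{++}$ define
$$\phi(\lambda)=\arg\min_{y\in Y}\lambda^t y.$$
The first task is to show $\phi(\lambda)\neq\emptyset$: fixing any $y^0\in Y_N$ (which exists by Theorem \ref{existence} applied to a compact slice), the continuous linear function $y\mapsto\lambda^t y$ attains its minimum over the compact set $K=(y^0-R^p_{\geqq})\cap Y$; one then uses the $R^p_{\geqq}$-convexity of $Y$ together with external stability (Theorem \ref{stab}) to check that such a minimizer over $K$ is in fact a minimizer over all of $Y$. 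Each $\phi(\lambda)$ is the intersection of the convex set $Y$ with a supporting hyperplane, hence convex and connected; and for $\lambda\in\Lambda^{++}$ the standard weighted-sum sufficient condition (Theorem 3.11 in \cite{ehr}) gives $\phi(\lambda)\subseteq Y_{PN}\subseteq Y_N$. Conversely, Geoffrion's scalarization theorem in the convex case yields $Y_{PN}=\bigcup_{\lambda\in\Lambda^{++}}\phi(\lambda)$.

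Next, I would verify that the set-valued map $\phi:\Lambda^{++}\rightrightarrows Y$ is upper semicontinuous with nonempty compact connected values, via Berge-type reasoning using closedness and convexity of $Y$ together with the compactness of the slice $K$. A standard topological lemma then implies that the union of such a family indexed by the connected set $\Lambda^{++}$ is itself connected; consequently $Y_{PN}$ is connected. Finally, the Arrow--Barankin--Blackwell density theorem in its form for closed convex $R^p_{\geqq}$-compact sets gives $Y_N\subseteq\overline{Y_{PN}}$, and since any set sandwiched between a connected set and its closure is connected, $Y_N$ is connected.

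I expect the main obstacle to lie in two places. First, the density statement $Y_N\subseteq\overline{Y_{PN}}$ is the deep ingredient that genuinely uses all three hypotheses, and is not trivial. Second, verifying $\phi(\lambda)\neq\emptyset$ for arbitrary $\lambda\in\Lambda^{++}$ is delicate because $R^p_{\geqq}$-compactness by itself only supplies compactness of the special slices $(y-R^p_{\geqq})\cap Y$, not of arbitrary weighted sublevel sets; the reduction of an unrestricted scalarization to the compact slice $K$ via external stability and a minimality argument is where the hypotheses of the theorem must be used carefully.
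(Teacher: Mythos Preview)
The paper does not supply a proof of Theorem \ref{connect}; it is quoted from \cite{nac}, and the paper's only contribution in this direction is the observation, via Theorem \ref{full}, that the $R^p_{\geqq}$-compactness hypothesis is already implied by closedness and convexity once $Y_N\neq\emptyset$. Your outline is essentially the classical scalarization route, but it has a genuine gap precisely at the step you flag as delicate.

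Your argument for $\phi(\lambda)\neq\emptyset$ fails. First, since you choose $y^0\in Y_N$, the slice $K=(y^0-R^p_{\geqq})\cap Y$ reduces to $\{y^0\}$, so ``the minimizer over $K$'' is $y^0$ for every $\lambda$, which cannot be a global minimizer of $\lambda^t y$ in general. More seriously, even with an arbitrary $y^0\in Y$ the claimed conclusion is false under the stated hypotheses. Take
\[
Y=\{y\in R^3:\ y_1+y_2\geq 0,\ y_3\geq e^{-y_1}\}.
\]
This set is closed and convex; for any $(a,b,c)\in Y$ the section $(a,b,c)-R^3_{\geqq}$ intersected with $Y$ forces $y_1\in[-\ln c,\,a]$, $y_2\in[-a,\,b]$, $y_3\in[e^{-a},\,c]$, so $Y$ is $R^3_{\geqq}$-compact; and $Y_N=\{(t,-t,e^{-t}):t\in R\}$ is nonempty (and connected, consistent with the theorem). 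Yet for $\lambda=(1,1,1)\in\Lambda^{++}$ one has $\lambda^t y=y_1+y_2+y_3>0$ on $Y$ while the sequence $(n,-n,e^{-n})$ drives it to $0$, so $\phi(\lambda)=\emptyset$. Hence $\phi$ need not have nonempty values on all of $\Lambda^{++}$; neither the Berge-type upper semicontinuity argument nor the connected-union lemma applies without first identifying a connected subset of $\Lambda^{++}$ on which $\phi$ is nonempty and which still exhausts $Y_{PN}$, and you have not done this.
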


This result has been addressed in some text books, see e.g.
Theorem 3.35 in \cite{ehr}. Now, we
show that the $\lq\lq R^p_{\geqq}-$compactness assumption" in Theorem \ref{connect} results from other assumptions of this theorem, and
hence it should be removed. If $Y_N=\emptyset$, then the result is
trivial. So, we assume that $Y_N\neq \emptyset$.

\begin{lemma}  If $Y\subset\mathbb{R}^p$ is closed and convex and $Y_N \neq \emptyset$, then
$Y$ is $R^p_{\geqq}$-compact.
\end{lemma}
\begin{proof}
If $Y$ is not $R^p_{\geqq}$-compact, then there exists some $\hat y\in Y$ such that
$(\hat y-R^p_\geqq)\cap Y$ is unbounded. It is clear that $(\hat y-R^p_\geqq)\cap Y$ is closed and convex.
Hence, by Lemma \ref{lem2} there exists some nonzero vector $d\in R^p$
such that $\hat y+\alpha d\in (\hat
y-R^p_\geqq)\cap Y$ for each $\alpha>0.$ So, $0\neq
d\leqq 0$ and
$$\hat y+\alpha d\in Y,~~\forall\alpha>0.$$
Thus, according to Lemma \ref{lem1}, taking $\hat y\in Y$ into account, we have
$$y+\alpha d\in Y,~~\forall\alpha>0,~\forall y\in Y.$$
This implies $y+d\in Y$ for each $y\in Y.$

Now considering arbitrary $y\in Y$, we have
$$y+d\in Y,~~y+d\leqq y,~~y+d\neq y.$$
This implies $Y_N=\emptyset$.
This contradiction completes the proof.
\end{proof}

The following important corollary results from the above discussion.

\begin{corollary}  If $Y\subset\mathbb{R}^p$ is closed and convex and $Y_N \neq \emptyset$,
then all the following hold:\\
(i) $Y$ is $R^p_{\geqq}$-compact,\\
(ii) $Y$ is $R^p_{\geqq}$-semicompact,\\
(iii) $Y_N$ is externally stable,\\
(iv) $Y_N$ is connected.
\end{corollary}


\end{document}